\documentclass[11pt,reqno]{amsart}
\usepackage[margin=2cm]{geometry}
\usepackage{amsmath,amssymb,amsthm,mathtools}
\usepackage{enumitem}
\usepackage{microtype}
\usepackage{xcolor}
\usepackage{hyperref}
\hypersetup{colorlinks=true, linkcolor=blue, citecolor=purple, filecolor=magenta, urlcolor=blue}

\newcommand{\F}{\mathbb{F}}
\newcommand{\FF}{\mathbb{F}}

\newcommand{\1}{\mathbf{1}}

\newcommand{\NNN}{\mathcal{N}}
\newcommand{\HHH}{\mathcal{H}}
\newcommand{\eps}{\varepsilon}
\newcommand{\brac}[1]{\left\{#1\right\}}
\newcommand{\brap}[1]{\left(#1\right)}
\newcommand{\abs}[1]{\left|#1\right|}

\newcommand{\defeq}{\coloneqq}

\DeclareMathOperator*{\mindeg}{min\,deg}
\DeclareMathOperator*{\avgdeg}{avg\,deg}

\newtheorem{theorem}{Theorem}[section]
\newtheorem{lemma}[theorem]{Lemma}
\newtheorem{conjecture}[theorem]{Conjecture}
\newtheorem{proposition}[theorem]{Proposition}

\theoremstyle{definition}
\newtheorem{definition}[theorem]{Definition}

\DeclareSymbolFont{bbold}{U}{bbold}{m}{n}
\DeclareSymbolFontAlphabet{\mathbbold}{bbold}
\newcommand{\One}{\mathbbold{1}}

\newcommand{\pal}{\mathrm{pal}}

\definecolor{AfonsoBlue}{RGB}{30,65,123}

\definecolor{Lucas}{RGB}{150, 0, 0}

\definecolor{PNNOrange}{RGB}{242,140,40}

\title{The hypergraph Moore bound}
\date{\today}
\author{Afonso S. Bandeira}
  \address{ASB: Department of Mathematics, ETH Z\"urich, Rämistrasse 101, 8092 Z\"urich, Switzerland.}
  \email{bandeira@math.ethz.ch}  
  \author{Dmitriy Kunisky}
  \address{DK: Department of Applied Mathematics \& Statistics, Johns Hopkins University, 6225 Smith Avenue, Baltimore, MD 21209, USA}
  \email{kunisky@jhu.edu} 
  \author{Petar Nizi\'{c}-Nikolac}
  \address{PNN: Department of Mathematics, ETH Z\"urich, Rämistrasse 101, 8092 Z\"urich, Switzerland.}
  \email{petar.nizic-nikolac@ifor.math.ethz.ch} 
  \author{Lucas Pesenti}
  \address{LP: Department of Computer Science and Department of Mathematics, ETH Z\"urich, Rämistrasse 101, 8092 Z\"urich, Switzerland.}
  \email{lpesenti@ethz.ch} 
  \author{Robert Wang}
  \address{RW: Cheriton School of Computer Science, University of Waterloo, Canada. (Part of this work was done while RW was an academic guest in the Department of Mathematics at ETH Z\"urich)}
  \email{robert.wang2@uwaterloo.ca}

\begin{document}
\pagestyle{plain}
\begin{abstract}
    The hypergraph Moore bound conjectured by Feige (2008) controls the size of the smallest even cover in a $k$-uniform hypergraph in terms of the average density of hyperedges.
    An even cover is a set of hyperedges covering each vertex an even number of times, generalizing the notion of a cycle in a graph, so the size of the smallest non-trivial even cover provides a notion of hypergraph girth.
    Recent work, starting from the breakthrough result of Guruswami, Kothari, and Manohar (2022) proved the conjecture up to polylogarithmic factors, whose exponents were later gradually improved.
    We give a simple proof of Feige's original hypergraph Moore bound conjecture for all 
    $k \geq 3$, with no superfluous polylogarithmic factors.
    For the case of $k$ even, our proof roughly follows the proof of the graph Moore bound, but works with colored walks in a Kikuchi graph built from a hypergraph and controls their growth using the polynomial method. The argument is then extended to the case of $k$ odd by adapting a procedure in  [GKM22]. 
\end{abstract}

\maketitle

\section{Introduction}

The classical \emph{Moore bound} describes how large the girth of a graph can be given its average degree.
In particular, Alon, Hoory, and Linial~\cite{graphMooreIrregular} proved that every
graph on $n$ vertices with average degree $d> 2$ contains a cycle of length at most $2\lceil \log_{d-1} n\rceil+2$. 
In this work, we study the analogous question for hypergraphs.

A natural analog of girth for a hypergraph
is the size of its
smallest \emph{even cover}, where an 
even cover is a nonempty set of hyperedges
such that every vertex belongs to an even
number of hyperedges in that set. 
Feige~\cite{feigeConjecture} analyzed the size of the smallest even cover of random hypergraphs.
Based on the intuition that random hypergraphs
should be approximately extremal, he proposed
the following generalization of the Moore bound.

\begin{conjecture}[Feige's hypergraph Moore bound conjecture]\label{conj:feige}
    For every $k\ge 3$, there exist constants $c_k,C_k>0$ such that
    the following holds.
    For all integers $n\ge k$ and $1\le \rho \le n$, every
    $k$-uniform hypergraph on $n$
    vertices with
    \begin{equation}
        m\ge c_k n\left(\frac n\rho\right)^{\frac k2-1}\label{eq:feigeCondition}
    \end{equation}
    hyperedges contains an even
    cover of size at most $C_k \rho \log n$.
\end{conjecture}

There is an equivalent linear algebraic
formulation of this conjecture.
Represent each hyperedge $E\subseteq [n]$ by
its indicator vector $\mathbf 1_E\in \F_2^n$. Then,
a set of hyperedges $\{E_1, \ldots, E_q\}$ is an even cover precisely when
\begin{equation}
    \mathbf 1_{E_1} + \cdots + \mathbf 1_{E_q}=0\text{ in $\F_2^n$}\,. \label{eq:hyperedge-comb}
\end{equation}
Thus, Conjecture~\ref{conj:feige}
postulates that, when~\eqref{eq:feigeCondition} holds, any system of 
$k$-sparse linear
equations over $\mathbb F_2$ with $n$ variables and $m$ equations has a small
non-trivial linear dependency.
In coding theory, this translates into 
a bound on the rate-distance
tradeoff of low-density parity-check (LDPC) error-correcting codes.

In this paper, we prove Conjecture~\ref{conj:feige}.

\begin{theorem}[Hypergraph Moore bound]\label{thm:moore}
    Conjecture \ref{conj:feige} holds.
    In particular, when $k$ is even, one can take $c_k = 64$ and $C_k = 4k$.
\end{theorem}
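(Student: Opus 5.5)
We follow the strategy announced in the abstract. First we treat even $k$ by a Moore-type counting argument on a Kikuchi graph, and then we reduce odd $k$ to the even case.

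\textbf{Even $k$: the Kikuchi graph.} Fix the level $\ell \approx \rho k/2$, or more precisely $\ell = \lceil \rho \rceil$ times a constant. Build the Kikuchi graph $K$ whose vertices are the $\ell$-subsets $S\subseteq[n]$. For each hyperedge $E$ we join $S$ and $T$ whenever $S\triangle T=E$ and $|S\cap E|=|T\cap E|=k/2$. Each edge of $K$ is labelled (colored) by its hyperedge $E$. A closed walk in $K$ whose multiset of colors has some color appearing an odd number of times produces an even cover: the symmetric difference of the colors used an odd number of times is $S\triangle S=\emptyset$. Its size is at most the walk length.

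A direct count gives the average degree. Each hyperedge contributes $\binom{k}{k/2}\binom{n-k}{\ell-k/2}$ edges, so
\[
\bar d \approx m\binom{k}{k/2}\Big(\tfrac{\ell}{n}\Big)^{k/2},
\]
which under \eqref{eq:feigeCondition} is at least a large constant times $\ell$. Meanwhile the number of vertices is $N=\binom{n}{\ell}$, so $\log N\approx \ell\log n$.

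\textbf{The Moore argument.} We aim to show that if there is no even cover of size $\le 2L$, with $L=C\ell\log n/\log(\bar d/\ell)$, then the number of walks of length $L$ grows like $(\bar d/\ell)^L$ while staying injective enough to exceed $N$. That is a contradiction once $L\gtrsim \log N/\log(\bar d/\ell)=O(\rho\log n)$. The role of "non-backtracking" in the graph Moore bound is played here by walks in which no color is repeated consecutively, or more robustly by color-distinct walks. We first pass to a subgraph of large minimum degree via standard pruning, giving $\mindeg\gtrsim\avgdeg$. Irregularity should be handled as in Alon--Hoory--Linial, through a stationary-measure or averaging argument on the number of walks.

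\textbf{Main obstacle.} The key step is proving that two distinct color sequences from a common start cannot end at the same vertex unless an even cover of size at most $2L$ exists. Two such walks with the same endpoint give $\sum_i \1_{E_i}+\sum_j\1_{F_j}=0$, and this sum is nontrivial if the color multisets differ mod 2. The difficulty is that the same color sequence can reach many endpoints, since the choices of which $k/2$ elements of $E$ to add are not determined. Equivalently, the number of distinct color sequences must be compared with $N$ only after accounting for the at most $\binom{k}{k/2}$ branching per step. I would try the polynomial method here. Associate to each vertex $S$ the monomial $x^S$ over $\F_2$ (or over the reals with multilinear reduction). Encode the walk count as the coefficient of a polynomial $\prod(\text{sum over colors})$. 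Then show, by a dimension or rank argument on the span of $\{x^S\}$ of size $N$, that the number of color sequences with no repeated-parity structure is at most $N\cdot\binom{k}{k/2}^{L}$. Since each step offers at least $\bar d/\binom{k}{k/2}\gtrsim \ell$ genuinely new colors in the min-degree subgraph, this yields the bound with $C_k=4k$ after tuning constants ($c_k=64$).

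\textbf{Odd $k$.} Adapt the GKM22 reduction. Pair hyperedges sharing a common vertex $v$: $E\triangle E'$ with $E\cap E'=\{v\}$ is a $(2k-2)$-uniform set. Choose a vertex split so that many such pairs exist, using a random bipartition and a Cauchy--Schwarz argument; high-degree vertices give quadratically many pairs. Then apply the even case to the resulting $(2k-2)$-uniform hypergraph with the appropriately rescaled $\rho$, checking that \eqref{eq:feigeCondition} transfers. An even cover of the paired hypergraph gives a nonempty even cover of the original hypergraph of at most twice the size, provided the pairing is by disjoint labelled copies. We ensure this by keeping only a matching-like subfamily of pairs per vertex. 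The regime where few vertices have high degree is handled separately by directly applying the even-$k$ argument to a level-$\ell$ Kikuchi graph with sides of sizes $\lfloor k/2\rfloor,\lceil k/2\rceil$.
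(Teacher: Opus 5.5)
There is a genuine gap in each half of your proposal.

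\textbf{Even case.} The gap is at the step you call the main obstacle, and that obstacle is misdiagnosed. In $K_\ell(H)$ a color $E$ available at $S$ leads to the unique neighbour $S\mathbin\Delta E$. So a color sequence from a fixed start determines its endpoint, and there is no $\binom{k}{k/2}$ branching. The real difficulty is the reverse. Even when $H$ has no even cover at all, many color sequences reach the same endpoint: every admissible reordering of the same set of colors does. Their number can be factorial in the length, so your bound of $N\binom{k}{k/2}^L$ on color-distinct sequences cannot follow from the absence of short even covers.

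Concretely, let $H$ be a matching (it has no even covers), $\ell=n/8$, and $S_0$ the union of halves of $M=2\ell/k$ edges of $H$. Then all $M!$ orderings of these edges are color-distinct walks from $S_0$ with a common endpoint, and $M!>2^{5n/4}\ge N\binom{k}{k/2}^M$ for large $n$. In any case, a rank argument on $\mathrm{span}\{x^S\}$, of dimension $N$, is global and says nothing about local growth.

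The missing idea is Lemma~\ref{lem:densityNeighborhood}. You should count vertices of balls, equivalently palettes (Definition~\ref{def:palette}), rather than walks. The lemma shows that every set inside a ball of radius at most $g/2-1$ induces average degree at most $2\ell$. This is where the polynomial method actually enters:
\begin{enumerate}
\item adjacent vertices of the ball have palettes at Hamming distance $1$ in $\F_2^H$, because there is no even cover of size at most $2j+1$;
\item $\One\{S=T\}=\prod_{v\in S}\bigl(\One\{v\in S_0\}+\sum_{E\ni v}\pal(T)_E\bigr)$ is a polynomial of degree $\ell$ in $\pal(T)$, since inclusion forces equality on the $\ell$-slice;
\item Lemma~\ref{lem:interpolationLemma} then bounds the number of hypercube edges by dimension counting.
\end{enumerate}
Against minimum degree $32\ell$ this gives $|\NNN_{j+1}|\ge 16|\NNN_j|$ directly, with no Alon--Hoory--Linial irregularity argument.

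\textbf{Odd case.} Keeping only a matching-like family of pairs at each vertex leaves $O(km)$ pairs. The Feige condition for a $(2k-2)$-uniform hypergraph at the same $\rho$ needs about $n(n/\rho)^{k-2}\approx m^2/n$ edges, so with $O(m)$ edges you only get covers of size about $\sqrt{n\rho}\,\log n$. If instead you keep the quadratically many pairs, derived even covers can be trivial in $H$, for instance $\1_{E_1\mathbin\Delta E_2}+\1_{E_2\mathbin\Delta E_3}+\1_{E_1\mathbin\Delta E_3}=0$.

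Resolving this tension is the purpose of the bucket decomposition of \cite{HsiehKothariMohanty2023} used in the paper:
\begin{enumerate}
\item the even case is invoked only for centers of size $i\ge(k+1)/2$, where buckets are disjoint pairs and the derived uniformity $2(k-i)\le k-1$ is low enough that $m/(2k)$ edges suffice;
\item small centers are handled by the pruned colored Kikuchi graph, bucket matchings (Lemma~\ref{lemma:densecorebucketmatchings}), and the product-space bound Lemma~\ref{lem:polynomial_block}, with one coordinate block per bucket.
\end{enumerate}
Your remaining regime is only asserted, and the even-case argument does not transfer as is. A Kikuchi graph between levels $\ell$ and $\ell+1$ has average degrees of order $m(\ell/n)^{(k-1)/2}$ and $m(\ell/n)^{(k+1)/2}$ on its two sides. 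The latter is far below $\ell$, so a local average-degree bound of $2\ell$ yields no ball growth from the level-$(\ell+1)$ side.
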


We have made no attempt to optimize the constants. The significance of Theorem~\ref{thm:moore} is that the density requirement is at the conjectured scale~\eqref{eq:feigeCondition}, 
with no additional factor of $\log n$.
The case where $k$ is even contains most of the new ideas. The argument for the even case can then be combined with the techniques
in~\cite{HsiehKothariMohanty2023} (with appropriate modifications, as described in Section~\ref{sec:odd-uniformcase}) to
establish the odd case as well.

\subsection{Related Work} 
For even $k$, Naor and Verstraëte~\cite{naorVerstraete}
proved Feige's conjecture in the high-density
regime $\rho = O(1)$ (meaning, for $\rho$ constant independent of $n$). 
This remained the only progress
until the breakthrough work of Guruswami, Kothari, and
Manohar~\cite{GuruswamiKothariManohar2022}, who
obtained bounds throughout the full range of $\rho$, but with an additional
factor of $\log^{4k+1} n$ in the density requirement~\eqref{eq:feigeCondition}.
This was improved to a single extra $\log n$
factor independently by Munhá Correia and Sudakov~\cite{munhaPersonal} and by Hsieh, Kothari, and Mohanty~\cite{HsiehKothariMohanty2023}.
For $k=3$,~\cite{naorVerstraete} showed that $m\ge C n^{1.5}\log n$ suffices
to guarantee an even cover of size $O(\log n)$, and~\cite{feigeConjecture} subsequently improved the density requirement to $m\ge C n^{1.5}\sqrt{\log \log n}$. For general odd $k$,~\cite{HsiehKothariMohantyMunhaCorreiaSudakov2024} proved that Feige's conjecture
holds, up to an additional
factor of $\log^{1/(k+1)} n$ in~\eqref{eq:feigeCondition} 
for $\rho\le n/\log^2 n$. 
Theorem~\ref{thm:moore} removes this 
logarithmic factor for all $k\geq 3$ and thereby confirms Feige's
original conjecture.

Among these prior works, the approaches in~\cite{munhaPersonal,HsiehKothariMohantyMunhaCorreiaSudakov2024}, which use purely combinatorial arguments based on rainbow paths, are closest in spirit to ours.

Several important problems in combinatorics and computer science admit natural formulations in terms of even covers in hypergraphs.
Conjecture~\ref{conj:feige}
was originally motivated by the \emph{existence}
of polynomial-time verifiable certificates 
of unsatisfiability for
random constraint satisfaction problems
at densities strictly below those at which
known polynomial-time algorithms succeed in
\emph{finding} them~\cite{FKO-2006-WitnessRandom3SAT}. As a consequence of their
proof of Feige's conjecture up to polylogarithmic factors,~\cite{GuruswamiKothariManohar2022}
showed the existence of such certificates
in \emph{semirandom} instances of constraint
satisfaction problems.

To prove this result,~\cite{GuruswamiKothariManohar2022}
developed and popularized the ``Kikuchi method'' introduced in~\cite{WeinElAlaouiMoore2019} in the context of algorithms for Tensor PCA, which has 
since found numerous
applications in average-case complexity~\cite{GuruswamiKothariManohar2022,KothariXu26,BandeiraCipolloniSchroederVanHandel2026} and
coding theory~\cite{AlrabiahGuruswamiKothariManohar2023,KothariManohar2024LinearLCC,KothariManohar2024SmoothLCC,HsiehKothariMohantyMunhaCorreiaSudakov2024,JanzerManohar2025,BasuHsiehKothariLin2025}. In particular, the strongest known results in most of these applications retain
extra logarithmic factors analogous to the one that until now remained in the hypergraph Moore bound. 
The Kikuchi graphs underlying this method are also the starting point of our proof.

\subsection*{Acknowledgments and Use of AI}

We would like to thank Benny Sudakov and Kevin Lucca for several insightful discussions on the topic of this paper, and Benny Sudakov in particular for valuable feedback on an earlier version of this manuscript.

We used modern AI tools in this work, primarily GPT-5.6~Sol, but also GPT-5.5 Pro, Claude Opus~4.8, and Claude Fable~5.
In fact, the core technical innovation in the proof was found by GPT-5.6~Sol. 
The authors were working on a program to attempt to establish versions of Conjecture~\ref{conj:feige} using random matrix theory (as in the line of work \cite{GuruswamiKothariManohar2022,HsiehKothariMohanty2023}) and the analysis of certain limiting infinite-dimensional operators, leading to problems concerning lower bounds on numbers of combinatorial walks arising as tracial moments of these operators.
A separate publication is forthcoming on other implications of this program.
The argument found by GPT-5.6~Sol 
to establish the even-uniform hypergraph Moore bound 
was first given in a rather different language from how we have written it in this paper, but the crux of the argument was the same, using a diagonal polynomial system to bound the number of edges in a suitable subgraph of the hypercube.

The paper was written entirely by the authors. AI was used to scan the manuscript for errors and inconsistencies in notation.
All errors are ours.

LP's work on this
project was supported by the Swiss National Science Foundation, grant no.\ 10004947. RW's work on this project was supported by the Queen Elizabeth II Graduate Scholarship in Science and Technology (QEII-GSST).

\subsection*{Notation}

Throughout, let $k\geq 3$, $H\subseteq {\binom {[n]} k}$ be the edge set of a simple $k$-uniform hypergraph, where $[n] \coloneqq \{1, \ldots, n\}$, and $m\coloneqq\abs{H}$.

We will often work over $\F_2^H$, Boolean vectors indexed by (the hyperedges of) $H$.
For $E\in H$, let $\eps_E$ be the corresponding standard basis vector (this is equivalently $\1_{\{E\}}$ in the notation~\eqref{eq:hyperedge-comb}, but we reserve that notation exclusively for vectors in $\F_2^n$ to avoid confusion).

We denote the natural logarithm by
$\log$.
For a graph $G$, we write $V(G)$ for its vertex set, $\mathcal{E}(G)$ for its edge set (reserving the symbol $E$ for hypergraph edges), and $\mathcal{E}(A, B)$ for subsets $A, B \subseteq V(G)$ for the set of edges with one endpoint in $A$ and the other in $B$ (in the latter notation, the graph $G$ involved will be clear from context).

\section{Proof of Theorem~\ref{thm:moore} in the Even-Uniform Case}\label{sec:proof_even}

In this section, we suppose $k \geq 4$ is even, that the number of hyperedges of $H$
satisfies
\begin{equation}
    m\ge 64 n \left(\frac n \rho\right)^{\frac k2 - 1}\,,\label{eq:density_assumption}
\end{equation}
and we show that $H$ has an even cover of size at most $4k\rho \log n$.

\subsection{Proof Sketch}\label{sec:proofOverview}
The \emph{level-$\ell$ Kikuchi graph}
$K=K_\ell(H)$ associated to $H$ is the graph on $V(K) = \binom {[n]} \ell$ in which $S,T\in V(K)$
are adjacent if and only if $S\mathbin{\Delta} T\in H$.
Our main technical lemma towards the proof of Theorem~\ref{thm:moore} is a bound on
the average degree of neighborhoods in the Kikuchi graph.

\begin{lemma}\label{lem:densityNeighborhood}
    Let $g$ be the size of the smallest even cover of $H$, and let $\mathcal S\subseteq V(K)$ be contained in a ball of radius $\frac g 2 - 1$
    with respect to the graph distance on $K$.
    Then, $\avgdeg(K[\mathcal S])\le 2\ell$.
\end{lemma}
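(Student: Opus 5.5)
The plan is to embed the neighborhood $\mathcal S$ into the Boolean hypercube $\F_2^H$ using the girth assumption. Then I would bound the number of induced edges by a polynomial-method argument that uses the constraint $|T|=\ell$.

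\emph{Step 1 (embedding).} Fix the centre $S_0$ of the ball. For every $T\in\mathcal S$, a shortest path $S_0=T_0,T_1,\dots,T_r=T$ with $r\le \frac g2-1$ gives labels $E_j=T_{j-1}\mathbin{\Delta}T_j\in H$. Set $x(T)\defeq\sum_j\eps_{E_j}\in\F_2^H$; this vector has weight at most $\frac g2-1$ and satisfies
\[
\1_T=\1_{S_0}+\sum_E x(T)_E\,\1_E .
\]
I claim $x(T)$ does not depend on the path. Indeed, two choices $x,x'$ give $\sum_E(x+x')_E\1_E=0$ with $|x+x'|\le g-2<g$. Hence $x+x'=0$, since otherwise its support would be an even cover of size less than $g$. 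By the same reasoning, $x$ is injective.

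Now suppose $S,T\in\mathcal S$ are adjacent in $K$ with $S\mathbin{\Delta}T=E$. The vector $x(T)+x(S)+\eps_E$ has weight at most $g-1$ and represents the zero vector of $\F_2^n$, so $x(T)=x(S)+\eps_E$. Conversely, hypercube adjacency of the images forces Kikuchi adjacency. Therefore $K[\mathcal S]$ is isomorphic to the subgraph $Q[X]$ of the hypercube induced on $X\defeq x(\mathcal S)$, and the goal becomes $|\mathcal E(Q[X])|\le \ell|X|$.

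\emph{Step 2 (the polynomial system).} For $i\in[n]$, consider the affine functional $f_i(x)=\1_{S_0}(i)+\sum_{E\ni i}x_E$ over $\F_2$. Let $g_i(x)=(-1)^{f_i(x)}$ be the corresponding signed character on $\{\pm1\}^H$. On $X$, the vector $(f_i(x))_i$ is exactly the indicator of the $\ell$-set $T$ with $x=x(T)$. This gives the diagonal constraint
\[
\sum_{i}g_i\equiv n-2\ell \quad\text{on } X .
\]
Moreover, each point indicator $\delta_{x(T)}$ restricted to $X$ equals $\prod_{i\in T}\frac{1-g_i}{2}$. This is a polynomial of degree at most $\ell$ in the $g_i$.

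Moving along a direction $\eps_E$ flips exactly the $g_i$ with $i\in E$. For an edge of $Q[X]$, exactly $k/2$ of these flip from $+1$ to $-1$ and $k/2$ flip from $-1$ to $+1$. So every function on $X$ lies in the span $\mathcal P_\ell$ of products of at most $\ell$ of the $g_i$'s, and the edge directions interact with these products in a controlled way.

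\emph{Step 3 (counting edges), which I expect to be the main obstacle.} I would try to prove that a set $X$ on which all functions are represented by degree-$\le\ell$ polynomials in the $g_i$ has at most $\ell|X|$ induced edges, generalising the classical fact that induced subgraphs of the cube have average degree at most $\log_2|X|$.

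My first attempt is to charge each edge $\{x,x+\eps_E\}$ of $Q[X]$ to a pair (vertex $T$, element $i\in T$). Orient the edge from $S$ to $T$ and charge it to the elements of $T\setminus S=E\cap T$. Then I would show, by a linear-independence argument, that the charges at each vertex are injective on average. Concretely, I would take the functions
\[
h_{T,i}=\prod_{j\in T\setminus\{i\}}\tfrac{1-g_j}{2}
\]
and show that the family $\{h_{T,i}\}$ together with the incidence structure of edges yields at least $|\mathcal E(Q[X])|$ linearly independent vectors in a space of dimension at most $\ell|X|$. The space $\mathcal P_{\ell-1}$ restricted to neighborhoods of points should give this dimension bound. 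The delicate point is making the dimension count respect the constraint $\sum_i g_i=n-2\ell$, which is where the diagonal structure must be used.

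If the direct charging fails, I would fall back to a Fourier/Laplacian computation. This would compare $\sum_{x\in X}\deg(x)$ with $\sum_T\langle \delta_{x(T)},L\,\delta_{x(T)}\rangle$ computed in the $g$-basis, using that each $g_i$ is an eigenfunction of the hypercube Laplacian with eigenvalue proportional to $|\{E\ni i\}|$.
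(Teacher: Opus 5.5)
\textbf{There is a genuine gap: Step~3, which carries the whole content of the lemma, is not proved.}

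Your Step~1 is correct and matches the paper's palette embedding; in fact it gives an isomorphism $K[\mathcal S]\cong Q[X]$. In Step~2 you already write down the right object: the $\F_2$-affine forms $f_i(x)=\1_{S_0}(i)+\sum_{E\ni i}x_E$, whose product over $i\in T$ is the point indicator of $x(T)$ on $X$.

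In Step~3 you only sketch a charging scheme and a Laplacian fallback, with no argument behind either. Moreover, the framing you chose would not close the gap. You measure complexity by degree in the characters $g_i=(-1)^{f_i}$. But a product $\prod_{i\in I}g_i$ equals $(-1)^{\sum_{i\in I}f_i(x)}$, and it changes sign under flipping $x_E$ for every hyperedge $E$ with $|E\cap I|$ odd. That can be far more than $\ell$ directions. So any direction-by-direction dimension count in the $g$-basis charges each basis function to too many directions. The same happens with a Laplacian eigenvalue count, where such a product has eigenvalue proportional to the number of such $E$. The constraint $\sum_i g_i=n-2\ell$ does not repair this. The fixed cardinality $|T|=\ell$ is needed only to turn $\One\{T'\subseteq T\}$ into $\One\{T'=T\}$, and you have already used it for that.

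\textbf{The missing idea} is to stay over $\F_2$ and measure degree in the hypercube coordinates $x_E$ themselves. This is exactly Lemma~\ref{lem:interpolationLemma} of the paper.
\begin{enumerate}
\item Expanding $\prod_{i\in T}\bigl(\1_{S_0}(i)+\sum_{E\ni i}x_E\bigr)$ gives an $\F_2$-polynomial of degree at most $\ell$ in the $x_E$.
\item Hence the restrictions to $X$ of monomials of degree at most $\ell$ span all functions $X\to\F_2$, and you can pick a basis $\mathcal B$ of $|X|$ such monomials.
\item Fix $E$. The edges of $Q[X]$ in direction $E$ form a matching $M_E$. The functions on $X$ that are constant across every edge of $M_E$ form a subspace of codimension exactly $|M_E|$. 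Every monomial of $\mathcal B$ not involving $x_E$ lies in this subspace, so $|M_E|\le|\{b\in\mathcal B: b \text{ involves } x_E\}|$.
\item Summing over $E$, each monomial is counted at most $\ell$ times. This gives $|\mathcal E(Q[X])|\le\ell|X|$, i.e.\ average degree at most $2\ell$.
\end{enumerate}
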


Here, $\avgdeg$ denotes the average degree and
$K[\mathcal N]$ denotes the subgraph of $K$ induced by $\mathcal N$.

Lemma~\ref{lem:densityNeighborhood} implies
Theorem~\ref{thm:moore} using a standard
ball-growing argument
that we briefly sketch (see Section~\ref{sec:outline} for details).
First, under the density assumption~\eqref{eq:density_assumption}, the Kikuchi graph of level $\ell\coloneqq \rho$ has average 
degree $\gg \ell$ (Proposition~\ref{prop:kikuchi-avg}). By iteratively deleting low-degree
vertices, we may in fact assume that $K$ has \emph{minimum} degree $\gg \ell$ (Lemma~\ref{lem:denseSubgraph}).
Fix any vertex of $K$ and iteratively grow a ball $\mathcal B_r$ of radius $r=1, 2,\ldots$ around it.
The minimum degree condition
shows that $\mathcal B_r$ has $\gg \ell |\mathcal B_r|$ adjacent edges;
these edges are fully contained in
$\mathcal B_{r+1}$, so Lemma~\ref{lem:densityNeighborhood} gives $|\mathcal B_{r+1}|\gg |\mathcal B_r|$. Thus the balls grow exponentially for as long as Lemma~\ref{lem:densityNeighborhood} applies. Since $K$ has only $\binom n \ell$
vertices, this growth can continue for at most
$O(\ell \log n)$ steps, which therefore must also be an upper bound on the girth of $H$.

Our main contribution is the proof of
Lemma~\ref{lem:densityNeighborhood} itself,
for which we use the polynomial method in its combinatorial form.
Let $Q$ denote the usual Boolean hypercube graph over
$V(Q) = \mathbb F_2^m$. We establish the following
lemma in Section~\ref{sec:interpolation},
which strengthens the usual edge-isoperimetric inequality for subsets of
the hypercube satisfying an additional algebraic assumption.

\begin{lemma}\label{lem:interpolationLemma}
    Let $\mathcal A\subseteq V(Q)$
    be such that for every $x\in \mathcal A$, the function
    $y\mapsto  \One \{x = y\}$ coincides on $\mathcal A$ with some polynomial over $\mathbb F_2$ of degree at most $d$.
    Then, $\avgdeg(Q[\mathcal A])\le 2d$.
\end{lemma}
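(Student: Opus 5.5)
The plan is a dimension-counting argument over $\F_2$. The hypothesis will only be used to extract a basis of $\F_2^{\mathcal A}$ consisting of low-degree monomials. The edges of $Q[\mathcal A]$ will then be charged, direction by direction, to pairs (monomial, variable contained in it).

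\emph{Step 1: a monomial basis.} Let $P_d$ be the space of multilinear polynomials over $\F_2$ in $m$ variables of degree at most $d$, and let $R\colon P_d\to \F_2^{\mathcal A}$ be restriction to $\mathcal A$. By assumption every indicator $y\mapsto\One\{x=y\}$ with $x\in\mathcal A$ lies in the image of $R$. These indicators span $\F_2^{\mathcal A}$, so $R$ is surjective. Since $P_d$ is spanned by the monomials $y^S=\prod_{i\in S}y_i$ with $|S|\le d$, we can choose a family $\mathcal M$ of such sets $S$ whose restrictions $R(y^S)$ form a basis of $\F_2^{\mathcal A}$. Hence $|\mathcal M|=|\mathcal A|$ and every $S\in\mathcal M$ has $|S|\le d$.

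\emph{Step 2: one direction at a time.} Fix a coordinate $i\in[m]$ and let $\mathcal A_i=\{x\in\mathcal A: x+e_i\in\mathcal A\}$. The edges of $Q[\mathcal A]$ in direction $i$ form a perfect matching on $\mathcal A_i$, so there are $|\mathcal A_i|/2$ of them. Define
\[
W_i=\{f\in \F_2^{\mathcal A}: f(x)=f(x+e_i)\text{ for all }x\in\mathcal A_i\}.
\]
Each matched pair imposes one independent linear constraint, so $\dim W_i=|\mathcal A|-|\mathcal A_i|/2$. Every monomial $y^S$ with $i\notin S$ is invariant under flipping coordinate $i$, so $R(y^S)\in W_i$. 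The restrictions of the monomials in $\mathcal M$ are linearly independent, so
\[
\#\{S\in\mathcal M: i\notin S\}\le |\mathcal A|-\tfrac{|\mathcal A_i|}{2}, \qquad\text{hence}\qquad \#\{S\in\mathcal M: i\in S\}\ge \tfrac{|\mathcal A_i|}{2}.
\]

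\emph{Step 3: summing.} Summing over $i$ gives
\[
|\mathcal E(Q[\mathcal A])|=\sum_i\frac{|\mathcal A_i|}{2}\le\sum_i\#\{S\in\mathcal M:i\in S\}=\sum_{S\in\mathcal M}|S|\le d|\mathcal M|=d|\mathcal A|.
\]
Therefore $\avgdeg(Q[\mathcal A])=2|\mathcal E(Q[\mathcal A])|/|\mathcal A|\le 2d$.

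The only real idea is Step 2: comparing, for each direction $i$, the dimension of the flip-invariant subspace $W_i$ with the number of basis monomials avoiding $i$. Once the monomial basis from Step 1 is fixed, the rest is bookkeeping, so I expect no further obstacle.
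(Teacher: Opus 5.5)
Your proof is correct and is essentially the paper's argument. The paper obtains this lemma as the $\Delta=1$ case of Lemma~\ref{lem:polynomial_block}, where it likewise picks a basis of $\F_2^{\mathcal A}$ made of functions depending on at most $d$ coordinates, compares the basis elements avoiding coordinate $i$ with the space of functions constant across direction-$i$ edges (phrased there as functions constant on connected components of $G_i$), and double counts; your implicit reduction of degree-$\le d$ polynomials to multilinear ones via $y_i^2=y_i$ is harmless.
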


The proof of Lemma~\ref{lem:interpolationLemma}
is a dimension-counting argument. By assumption, the monomials of degree at most
$d$ span the space of functions $\mathcal A\to \mathbb F_2$, so we may choose a basis
of $|\mathcal A|$ such monomials.
Partition the edges of $Q[\mathcal A]$ according to the coordinate in which their endpoints differ. For each $i\in [m]$, the subspace of functions $\mathcal A\to \mathbb F_2$ that are constant across
every edge in direction $i$ has codimension exactly
the number of these edges, since they form a matching. Every basis monomial
not containing $x_i$ belongs to this subspace, so the number
of edges in direction $i$ is at most the number
of monomials containing $x_i$. Summing over $i$,
each of the $|\mathcal A|$ monomials 
is counted at most $d$ times, so $Q[\mathcal A]$
has at most $d|\mathcal A|$ edges.

Finally, Lemma~\ref{lem:interpolationLemma} implies Lemma~\ref{lem:densityNeighborhood} by realizing $K[\mathcal S]$ as a subgraph of the hypercube (see Section~\ref{sec:palette}).
Let $\mathcal S$ be a ball of radius $r<(g-1)/2$
centered at $S_0$, and map each $S\in \mathcal S$ to its \emph{palette} in $\mathbb F_2^m$, which records
the set of hyperedges of $H$ appearing an odd
number of times along a shortest path
from $S_0$ to $S$ (Definition~\ref{def:palette}). 
If $S,T\in \mathcal S$ are adjacent, then their palettes are adjacent in the hypercube. Indeed, the two
chosen paths together with the edge $\{S,T\}$
form a closed walk of length at most $2r+1<g$. 
Since there
are no even covers of this size by assumption, the set of edge labels appearing an odd number of times along the walk is empty, and the
palettes of $S$ and $T$ differ exactly
in the coordinate corresponding to $S\mathbin{\Delta} T$.
Finally, for each vertex $u$,
the map $T\mapsto \One\{u\in T\}$ is linear in the palette of $T$. 
Taking the product
over all $u\in S$ gives a degree-$\ell$ polynomial implementing the delta function at $S$ (see~\eqref{eq:polynomial_even_case}).

The rest of this section is devoted to the full proof of Theorem~\ref{thm:moore}. 

\subsection{Large $\rho$ Case}\label{sec:small_density_case}

Assume \eqref{eq:density_assumption} holds.
We first handle the easier case of $\rho>n/(4k)$.
Denote by the \emph{weight} of a Boolean vector the number of entries equal to 1.
Consider the vectors $\1_E$ over $E \in H$.
Any non-trivial linear dependency between these vectors is precisely an even cover of $H$ of that size, giving a solution to~\eqref{eq:hyperedge-comb}.
Using the crude bound $m \geq 64n > n+1$, we conclude that any $n+1$ of these vectors are linearly dependent. 
Hence, some non-empty collection of at most $n+1$ distinct hyperedges is an even cover.
As $n+1<4k\rho\log n$, the result holds in this case.

From now on, we may therefore assume that
\begin{equation}
\rho\leq \frac{n}{4k}. \label{eq:rho-asm}
\end{equation}

\subsection{Reduction to a Dense Subgraph}
Set
    \begin{equation}
        \ell \coloneqq \max\left\{\frac{k}{2},\lceil\rho\rceil\right\}. \label{eq:ell-choice}
    \end{equation}
    Since $\lceil\rho\rceil\leq2\rho\leq(k/2)\rho$ and $\rho\leq n/(4k)$, we have
    \begin{equation*}
        \rho\leq\ell\leq \frac{k\rho}{2} \leq\frac n8,
    \end{equation*}
and so also $\frac k2\leq \ell\leq n-\frac k2$, which makes the level-$\ell$ Kikuchi graph well-defined.
We view an edge $\{S, T\}$ in the Kikuchi graph $K$ as being colored by $S \mathbin\Delta T$, so that $K$ is edge-colored by the set $H$.
We denote the size of the vertex set of $K$ by
\[ N \coloneqq \binom n\ell = |V(K)|. \]

For every $E \in H$, the number of edges in $K$ colored by $E$ is the same, given by multiplying the number of ways to partition $E$ into two equal parts of size $k/2$ and the number of ways to choose a further subset of $[n]$ of size $\ell - k/2$ disjoint from $E$, divided by two.
Using this in a double-counting calculation along with basic asymptotic estimates, we find the following.
\begin{proposition}
    \label{prop:kikuchi-avg}
    Let $K = K_{\ell}(H)$ be as above for arbitrary $\frac{k}{2} \leq \ell \leq n - \frac{k}{2}$.
    Then,
    \begin{equation}\label{eq:kikuchi_average_degree}
    \avgdeg\brap{K}
    = \frac{m\,
    \binom{k}{k/2}
    \binom{n-k}{\ell-k/2}}{\binom n\ell}.
    \end{equation}
    Further, if also $k \leq \frac{n}{4}$ and $\ell \leq \frac{n}{8}$, then
    \begin{equation}\label{eq:kikuchi_average_degree_est}
    \avgdeg(K) \geq m\left(\frac{\ell}{n}\right)^{k/2}.
    \end{equation}
\end{proposition}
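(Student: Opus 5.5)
The identity \eqref{eq:kikuchi_average_degree} will come from counting the edges of $K$ color by color, and the estimate \eqref{eq:kikuchi_average_degree_est} from splitting the resulting ratio of binomials into a factor depending on $\ell$ and a factor depending on $n$. The only step that needs care is the constant in the estimate, because a naive bound loses a factor exponential in $k$.

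\textbf{Counting edges by color.} Fix $E\in H$. An ordered pair $(S,T)$ with $|S|=|T|=\ell$ and $S\mathbin\Delta T=E$ is determined by two choices:
\begin{itemize}[label={}, leftmargin=1em]
\item the set $A=S\cap E$, which must have $|A|=k/2$ because $|S|=|T|$;
\item the set $C=S\setminus E=T\setminus E$, which has $|C|=\ell-k/2$ and is disjoint from $E$.
\end{itemize}
Then $S=A\cup C$ and $T=(E\setminus A)\cup C$. So there are $\binom{k}{k/2}\binom{n-k}{\ell-k/2}$ ordered pairs, and half as many edges of color $E$. Each edge of $K$ has exactly one color, namely $S\mathbin\Delta T$, so summing over $E\in H$ gives
\[
|\mathcal E(K)|=\tfrac12\, m\binom{k}{k/2}\binom{n-k}{\ell-k/2}.
\]
Since $\avgdeg(K)=2|\mathcal E(K)|/N$ with $N=\binom n\ell$, this is \eqref{eq:kikuchi_average_degree}.

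\textbf{Splitting the ratio.} Expanding the factorials gives
\[
\binom{k}{k/2}\frac{\binom{n-k}{\ell-k/2}}{\binom n\ell}
=\underbrace{\frac{k!}{(k/2)!}\binom{\ell}{k/2}}_{(\mathrm{I})}\cdot
\underbrace{\frac{\prod_{j=0}^{k/2-1}(n-\ell-j)}{\prod_{i=0}^{k-1}(n-i)}}_{(\mathrm{II})}.
\]
This holds because $\binom{k}{k/2}\cdot\ell!/(\ell-k/2)! = \frac{k!}{(k/2)!}\binom{\ell}{k/2}$.

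\textbf{Bounding (II).} Bound the denominator by $n^k$. Use $\ell\le n/8$ and $k\le n/4$ to get $n-\ell-j\ge n-n/8-n/8=\tfrac34 n$ for each $j<k/2$. Hence $(\mathrm{II})\ge (3/4)^{k/2}\,n^{-k/2}$.

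\textbf{Bounding (I), the main obstacle.} Using $\binom{\ell}{k/2}\ge(2\ell/k)^{k/2}$ (valid since $\ell\ge k/2$) together with the trivial bound $k!/(k/2)!\ge(k/2)^{k/2}$ only gives $(\mathrm{I})\ge \ell^{k/2}$. That cannot absorb the $(3/4)^{k/2}$ loss from (II), so the constant must be sharpened. I would apply Stirling-type bounds, $k!\ge\sqrt{2\pi k}\,(k/e)^k$ and $(k/2)!\le e\sqrt{k/2}\,(k/(2e))^{k/2}$. These give
\[
\frac{k!}{(k/2)!}\ \ge\ \frac{2\sqrt\pi}{e}\left(\frac{2k}{e}\right)^{k/2}\ \ge\ \left(\frac{2k}{e}\right)^{k/2},
\]
and therefore $(\mathrm{I})\ge (4\ell/e)^{k/2}$.

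\textbf{Combining.} Multiplying the two bounds,
\[
(\mathrm{I})\cdot(\mathrm{II})\ \ge\ \left(\frac{3}{e}\cdot\frac{\ell}{n}\right)^{k/2}\ \ge\ \left(\frac{\ell}{n}\right)^{k/2},
\]
since $3/e>1$. Multiplying by $m$ yields \eqref{eq:kikuchi_average_degree_est}.
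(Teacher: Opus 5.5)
Your proof is correct and follows essentially the paper's route. You use the same color-by-color count, the same bounds $(\ell)_{k/2}\ge (k/2)!\,(2\ell/k)^{k/2}$, $(n-\ell)_{k/2}\ge(3n/4)^{k/2}$ and $(n)_k\le n^k$, and you reach the same constant $(3/e)^{k/2}$. The only difference is how you show $\frac{k!}{(k/2)!\,(k/2)^{k/2}}=\prod_{j=1}^{k/2}\bigl(1+\frac{2j}{k}\bigr)\ge(4/e)^{k/2}$: you use Stirling bounds, while the paper compares $\frac{2}{k}\sum_{j}\log\bigl(1+\frac{2j}{k}\bigr)$ with $\int_0^1\log(1+x)\,dx=2\log 2-1$.
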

The proof is given in Appendix~\ref{app:kikuchi_density}.

Using this general calculation, we find that in our setting $K$ must have a non-empty ``core'' dense subgraph of high minimum degree.
\begin{lemma}[Dense subgraph of Kikuchi graph]\label{lem:denseSubgraph}
    In the above setting (i.e., with $\rho$ satisfying~\eqref{eq:rho-asm}, $\ell$ as in~\eqref{eq:ell-choice}, and the assumption~\eqref{eq:density_assumption} holding), there exists a nonempty induced subgraph $K^\star$ of $K$ of minimum degree at least $32\ell$.
\end{lemma}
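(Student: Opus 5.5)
The plan has two steps: first show that $K$ has average degree at least $64\ell$, then peel off low-degree vertices to leave a core of minimum degree at least $32\ell$.

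\emph{Average degree.} We apply the estimate~\eqref{eq:kikuchi_average_degree_est} of Proposition~\ref{prop:kikuchi-avg}. Its hypotheses hold: $\ell\le n/8$ was shown above, and $k\le n/4$ follows from $1\le\rho\le n/(4k)$, which gives $4k\le n$. Combining with~\eqref{eq:density_assumption} and $\ell\ge\rho$,
\[
\avgdeg(K)\ \ge\ m\left(\frac{\ell}{n}\right)^{k/2}\ \ge\ 64\, n\left(\frac{n}{\rho}\right)^{\frac k2-1}\left(\frac{\ell}{n}\right)^{k/2}
\ =\ 64\,\ell\left(\frac{\ell}{\rho}\right)^{\frac k2-1}\ \ge\ 64\,\ell .
\]
So $K$ has at least $32\ell N$ edges.

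\emph{Peeling.} We use the standard argument. Start from $K$ and repeatedly delete any vertex whose degree in the current induced subgraph is less than $32\ell$, until no such vertex remains. Each deletion removes fewer than $32\ell$ edges. Even if all $N$ vertices were deleted, the total number of edges removed would be less than $32\ell N$, which is at most $|\mathcal E(K)|$. So the process cannot exhaust the graph: it stops at a nonempty induced subgraph $K^\star$ in which every vertex has degree at least $32\ell$.

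\emph{Expected difficulty.} There is no real obstacle here. The only point needing care is checking the side conditions of Proposition~\ref{prop:kikuchi-avg} ($k\le n/4$ and $\ell\le n/8$), and, in the peeling step, making the strict inequality explicit: the removed edge count is strictly less than $32\ell N$, while the total is at least $32\ell N$.
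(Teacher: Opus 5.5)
Your proposal is correct and follows essentially the same route as the paper. Like the paper, you derive $\avgdeg(K)\ge 64\ell$ from Proposition~\ref{prop:kikuchi-avg} and the density assumption, then delete vertices of degree below $32\ell$ until a nonempty core remains. You also verify the side conditions $k\le n/4$ and $\ell\le n/8$ explicitly, which the paper leaves implicit.
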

\begin{proof}
    An elementary argument shows that every finite graph of average degree $d$ contains a nonempty induced subgraph of minimum degree at least $d/2$.\footnote{To show this, repeatedly delete vertices of degree less than $d/2$ from a graph $G = (V, \mathcal{E})$. If every vertex were deleted, fewer than $(d/2)|V| = |\mathcal{E}|$ edges would have been deleted, which is impossible.}
    So, it suffices to verify that, under these assumptions, $\avgdeg(K) \geq 64\ell$.
    Indeed, we have by Proposition~\ref{prop:kikuchi-avg} combined with our assumptions,
    \begin{equation*}
        \avgdeg\brap{K}
        \geq
        m\brap{\frac \ell n}^{k/2}
        \geq
        64n\brap{\frac n\rho}^{k/2-1}\brap{\frac \ell n}^{k/2}
        =64\ell\brap{\frac \ell\rho}^{k/2-1}
        \geq64\ell. \qedhere
    \end{equation*}
\end{proof}

\subsection{From Low Density to Small Even Covers}
\label{sec:outline}

In this section, we prove Theorem~\ref{thm:moore}
assuming Lemma~\ref{lem:densityNeighborhood}.

Recall the idea of the proof of the graph Moore bound (the case $k = 2$): an even cover in a graph is just a union of cycles, and the absence of short cycles means that a graph is locally tree-like, in which case its neighborhoods grow exponentially fast if the minimum degree is large.
The absence of a cycle of length $\Theta(\log n)$ then implies a contradiction since there are not enough vertices for these neighborhoods to continue growing exponentially without exhausting all vertices of the graph.

Our argument is similar, but working with the Kikuchi graph. As in the proof of the graph Moore bound, we pass to a subgraph $K^\star$ with high enough minimum degree (in our case, $K^\star$ is a subgraph of the Kikuchi graph).
We cannot be so direct as to use the graph Moore bound on $K^\star$, since the Kikuchi graph (and so $K^\star$ as well) usually has short cycles, but ones that correspond only to trivial even covers. For example, 
suppose that $k = \ell = 4$, and $E_1 = \{1, 2, 3, 4\}$ and $E_2 = \{5, 6, 7, 8\}$ both belong to $H$.
Then,
\( \{1, 2, 5, 6\}, \{3, 4, 5, 6\}, \{3, 4, 7, 8\}, \{1, 2, 7, 8\} \)
form a 4-cycle in $K_{\ell}(H)$, but since the consecutive symmetric differences are $E_1, E_2, E_1, E_2$, this cycle corresponds to a trivial even cover in $H$.

Instead, we need to work only with the ``interesting'' cycles in $K^\star$.
To this end, call a cycle \emph{unpaired} if some edge color in the cycle occurs an odd number of times, and define the \emph{unpaired girth} of $K^\star$
as the minimal length of an unpaired cycle.
By construction, there is an even cover in $H$ of size at most the unpaired girth of $K^\star$
(consisting of the set of edge colors occurring an odd number of times in a shortest unpaired cycle).

\begin{proof}[Proof of Theorem~\ref{thm:moore} from Lemma~\ref{lem:densityNeighborhood}] 
    We argue by contradiction, and assume that $H$ contains no even cover of size at most $4k\rho\log n$.
    Define
    \begin{equation*}
        q \coloneqq \lfloor 2k\rho \log n \rfloor, 
    \end{equation*}
    which also satisfies, since $\rho \geq 2\ell / k$, that
    \begin{equation*}
    q \geq \lfloor 4\ell \log n \rfloor \geq \lfloor 4 \log N \rfloor > \log_4 N > 1.
    \end{equation*}

    Fix an arbitrary root $S_0 \in V(K^\star)$.
    For $0 \leq j \leq q - 1$, let $\NNN_j$ be the ball of radius $j$ centered at $S_0$ in 
    the graph distance of $K^{\star}$.
    We then have
    \[ \{S_0\} = \NNN_0 \subseteq \NNN_1 \subseteq \cdots \subseteq \NNN_{q - 1}. \]
    Combining that $\mindeg(K^\star) \geq 32\ell$ and Lemma~\ref{lem:densityNeighborhood}, we find that, for each $0 \leq j \leq q - 2$,
    \begin{equation}\label{eq:exponentialgrowthneighborhoods32and16}
        32\ell \cdot |\NNN_j| \leq \sum_{S \in \NNN_j} \deg_{K^\star}(S) \leq 2|\mathcal{E}(K^\star[\NNN_{j+1}])| = 2|\mathcal{E}(K[\NNN_{j+1}])| \leq 2\ell |\NNN_{j+1}|.
    \end{equation}
    where the second inequality uses that $\NNN_{j+1} \supseteq \NNN_j$ and that all edges with one endpoint in $\NNN_j$ have their other endpoint in $\NNN_{j + 1}$. Every path in $K^\star$ is a path in $K$, so $\NNN_{j+1}$ is contained in a ball of $K$ of radius $j+1$, and so the last inequality follows from Lemma~\ref{lem:densityNeighborhood}.  
    Rearranging, we find
    \[ |\NNN_{j+1}| \geq 16 |\NNN_j|. \]

    We apply this repeatedly, starting with $|\NNN_0| = 1$, until we reach a lower bound on $|\NNN_{q-1}|$, which reads
    \[ |\NNN_{q-1}| \geq 16^{q-1} \geq 4^q > N. \]
    $\NNN_{q-1}$ is a subset of vertices of the Kikuchi graph, of which there are only $N$ in total, so this is a contradiction.
    We conclude that $H$ contains an even cover of size at most $4k\rho\log n$.
\end{proof}

\subsection{Density Bound via Polynomial Method}\label{sec:interpolation}
In this section, we prove Lemma~\ref{lem:interpolationLemma}, whose proof is akin to that of Frankl-Wilson-type theorems in~\cite{alon1991multilinear}.
A similar result appears in the work of Haussler, Littlestone, and Warmuth~\cite{HausslerLittlestoneWarmuth1994}, phrased in terms of the one-inclusion graph
and the Vapnik-Chervonenkis (VC) dimension.

We give the following slightly more general version of Lemma~\ref{lem:interpolationLemma} that will also be useful for the odd case.
Given a product set $\Omega \coloneqq \Omega_1\times \dots\times \Omega_h$, we denote the Hamming distance on $\Omega$ by 
\[
    d_H(p,p') \coloneqq\left|\{i\in [h]\colon p[i] \neq p'[i]\}\right|
\]
for all $p,p' \in \Omega$.

\begin{lemma}[Polynomial method density bound]
    \label{lem:polynomial_block}
    Let $\Delta\ge 1$, $1\le d\le h$ be integers, $G = (V, \mathcal{E})$ be a graph, $\Omega_1, \dots, \Omega_h$ be finite sets, and $\Omega \coloneqq \Omega_1\times\cdots \times \Omega_h$. 
    Suppose that the vertices of $G$ 
    are assigned distinct labels $p_u\in \Omega$ for $u\in V$, such that
    $d_H(p_u,p_v)=1$ for every edge $\{u,v\}\in \mathcal E$.

    For each $i\in [h]$, let $G_i$ be the 
    spanning subgraph of $G$ whose edge set is $\mathcal E(G_i)\coloneqq \{\{u,v\}\in \mathcal E:p_u[i]\neq p_v[i]\}$. Suppose that each $G_i$ has maximum degree at most $\Delta$.

    Let $F_{\leq d}$ be the $\FF_2$-linear span of all functions $\Omega \to \FF_2$ that depend on at most $d$ coordinates.
    Suppose that, for each $u \in V$, there exist $f_u \in F_{\leq d}$ such that $f_u(p_v) = \One\{u = v\}$ for every $v\in V$.
    
    Then, $\avgdeg(G) \leq 2\Delta d$.
\end{lemma}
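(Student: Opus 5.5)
The plan is a dimension count, following the sketch given for Lemma~\ref{lem:interpolationLemma}. Consider the restriction map $\pi\colon F_{\le d}\to \F_2^V$, $f\mapsto (f(p_v))_{v\in V}$. By hypothesis the functions $f_u$ map to the standard basis vectors $\One\{\cdot = u\}$, so $\pi$ is surjective. The space $F_{\le d}$ is spanned by functions of the form $\phi_J$, each depending only on the coordinates in some set $J\subseteq[h]$ with $|J|\le d$. Hence we can select a family $\mathcal B$ of $|V|$ such functions, each with an attached support set $J_b$ of size at most $d$, whose images under $\pi$ form a basis of $\F_2^V$.

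Next, fix a coordinate $i\in[h]$ and let $W_i\subseteq \F_2^V$ be the subspace of vectors $x$ with $x_u=x_v$ for every edge $\{u,v\}\in\mathcal E(G_i)$. Any $b\in\mathcal B$ with $i\notin J_b$ has $\pi(b)\in W_i$. This is because an edge of $G_i$ joins labels that differ only in coordinate $i$ (recall $d_H(p_u,p_v)=1$), and such a $b$ does not see that coordinate. Therefore
\[
|\{b\in\mathcal B: i\notin J_b\}|\le \dim W_i,
\]
and so the number of basis elements whose support contains $i$ is at least $\operatorname{codim} W_i = |V|-\dim W_i$.

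The main step is to lower bound this codimension in terms of $|\mathcal E(G_i)|$ when $G_i$ is not a matching. Here $\operatorname{codim} W_i = |V| - c(G_i)$, where $c(G_i)$ is the number of connected components of $G_i$, i.e.\ the rank of the edge-constraint system. A connected component with $s$ vertices has maximum degree at most $\Delta$, hence at most $\Delta s/2$ edges. Also $s-1\ge s/2$ whenever $s\ge 2$, and isolated vertices carry no edges. Summing over components gives
\[
|\mathcal E(G_i)| \le \Delta\,(|V|-c(G_i)) = \Delta\operatorname{codim} W_i.
\]
I expect this edge-versus-rank step to be the only real obstacle. The constant factor loss here is exactly what produces the $2\Delta$ rather than $\Delta$ in the conclusion; for matchings ($\Delta=1$) the argument recovers the bound in Lemma~\ref{lem:interpolationLemma}.

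Finally, sum over $i$. Each edge of $G$ lies in exactly one $G_i$, since its endpoint labels differ in exactly one coordinate. Each $b\in\mathcal B$ has $i\in J_b$ for at most $d$ values of $i$. Combining these,
\[
|\mathcal E| = \sum_i |\mathcal E(G_i)| \le \Delta \sum_i |\{b: i\in J_b\}| \le \Delta d |V|.
\]
This gives $\avgdeg(G)\le 2\Delta d$, as claimed. The hypothesis that the labels are distinct is used implicitly, to make sense of the values $f(p_v)$ as a function on $V$. The hypothesis $d\ge1$ is only cosmetic.
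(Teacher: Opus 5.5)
Your proposal is correct and follows the paper's proof essentially step for step. You choose $|V|$ spanning functions of support at most $d$ whose restrictions form a basis, observe that those not involving coordinate $i$ restrict into the space of functions constant on the components of $G_i$, bound $|\mathcal E(G_i)|\le \Delta(|V|-c(G_i))$, and double count; you also supply the short proof of the edge bound, which the paper states without proof. One small correction to your commentary: the factor $2$ in $2\Delta d$ comes from $\avgdeg(G)=2|\mathcal E|/|V|$, not from slack in the edge-versus-rank step. That step is tight for matchings, yet the conclusion there is still $2d$.
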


Lemma~\ref{lem:interpolationLemma} follows easily as a special case of Lemma~\ref{lem:polynomial_block} for $h=m$,
$\Omega_1=\ldots=\Omega_h = \FF_2$, and $G = Q[\mathcal A]$ (in which case each $G_i$ forms a matching, so $\Delta=1$).

\begin{proof}
    By assumption, the restrictions of functions in $F_{\le d}$ to $A \coloneqq \{p_v : v\in V\}$
    span all functions $A \to \FF_2$.
    Therefore, we may choose a collection
    $\mathcal B$ of functions $\Omega\to \FF_2$ that depend
    on at most $d$ coordinates and whose restrictions to $A$ form a basis for the
    space of functions $A\to \FF_2$.
    In particular, $|\mathcal B| = |A| = |V|$. For each $f\in \mathcal B$, fix
    a set $I_f\subseteq [h]$ of coordinates
    on which $f$ depends, with $|I_f|\le d$.

    For each $i\in [h]$, consider the subspace
    $W_i$ of functions $V\to \FF_2$ that are
    constant on every connected component of $G_i$. Then, $\dim W_i$ is the
    number of connected components of $G_i$.
    Now, observe that if $f\in \mathcal B$
    is such that $i\notin I_f$, then $u\mapsto f(p_u)$ is constant across every edge of $G_i$ (and so is also constant on every connected component). Hence,
    \[
        \dim W_i \ge \left|\{f\in \mathcal B: i\notin  I_f\}\right|\,.
    \]
    Finally, a graph with $n$ vertices, $c$ connected components and maximum degree $\Delta$ has at most $\Delta(n-c)$ edges.
    Since $G_1, \ldots, G_h$ form a partition
    of $G$, we obtain
    \[
        |\mathcal E| = \sum_{i=1}^h |\mathcal E(G_i)|\le\Delta \sum_{i=1}^h (|V(G_i)| - \dim W_i)\le \Delta \sum_{i=1}^h \left|\{f \in \mathcal B : i\in I_f\}\right| \le \Delta |V|d\,,
    \]
    where the last inequality uses double counting. This concludes the proof.
\end{proof}

\subsection{Palette Embedding}\label{sec:palette}

In this section, we prove Lemma~\ref{lem:densityNeighborhood} from Lemma~\ref{lem:interpolationLemma}.
Let $\mathcal S\subseteq V(K)$ be contained
in a ball $\NNN_j$ of radius $j\le \frac g2-1$ centered at $S_0$.
We describe the hypercube labeling that we will use to apply Lemma~\ref{lem:interpolationLemma}, associating with each $S \in \mathcal S$ a Boolean vector.
We call this the \emph{color palette} of $S$, because it is associated with the sequence of colors (edges of $H$) encountered along a walk from $S_0$ to $S$.

\begin{definition}[Color palette]\label{def:palette}
Assign to each vertex $S\in\mathcal S$ its \emph{color palette}, an element $\pal(S) \in \FF_2^H$, formed by taking a walk of length at most $j$ from $S_0$ to $S$ and recording which colors $E\in H$ were visited an odd number of times. Because the girth of $H$
is greater than $2j + 1$, this definition does not depend on the walk taken. 
Explicitly, given any such walk $S_0, \ldots, S_t = S$, for some $t \leq j$, we set
\[ \pal(S) \coloneqq \sum_{i = 1}^{t} \eps_{S_i \mathbin \Delta S_{i -1}}  \in \mathbb{F}_2^H.\]
\end{definition}
\noindent
Note also that the color palette $\pal(S)$ together with the choice of root $S_0$ determines $S$, since from $\pal(S)$ we may compute $\1_{S_0} + \sum_{i = 1}^t (\1_{S_i} + \1_{S_{i-1}}) = \1_S$.
We also observe that, if two $S_1,S_2 \in \mathcal S$ are adjacent as vertices of $K$, then their palettes must have Hamming distance~1.
Hence, $K[\mathcal S]$ is isomorphic
to (a subgraph of) the subgraph of the hypercube induced by
$\{\pal(S) : S\in \mathcal S\}$.

It remains to produce a suitable collection of low-degree polynomials $f_S$ for $S \in \mathcal S$.
Consider the following polynomials:
\begin{equation}
    f_S(X) = \prod_{v \in S}\left(\One\{v \in S_0\} + \sum_{E \in H: v \in E} X_E\right).\label{eq:polynomial_even_case}
\end{equation}
To see why these polynomials satisfy the condition of Lemma~\ref{lem:interpolationLemma}, consider the evaluation with $X = \pal(T)$.
The expression $\One\{v \in S_0\} + \sum_{E \in H: v \in E} X_E$ counts the parity of the number of times vertex $v$ is visited along a walk from $S_0$ to $T$, which, by the definition of a walk in the Kikuchi graph, is simply $\One\{v \in T\}$.
Thus we have
\begin{equation*}
    f_S(\pal(T)) = \prod_{v \in S} \One\{v \in T\} = \One\{S \subseteq T\} = \One\{S = T\}, 
\end{equation*}
the final step following because $|S| = |T|$.
We emphasize that this simple last step is a crucial consequence of working over the Kikuchi graph defined on sets of a fixed cardinality (equivalently, a slice of the hypercube of fixed weight $\ell$), and that on this slice inclusion of sets implies equality. 

We also have $\deg(f_S) \leq |S| = \ell$.
Applying Lemma~\ref{lem:interpolationLemma} with this choice of polynomials $f_S$ implies
Lemma~\ref{lem:densityNeighborhood}.

\section{Proof of Theorem~\ref{thm:moore} in the Odd-Uniform Case
}\label{sec:odd-uniformcase}

In this section, we suppose $k \geq 3$ is odd.
We prove Theorem~\ref{thm:moore} by combining the proof from Section \ref{sec:proof_even} for even-uniform hypergraphs with the procedure in~\cite{GuruswamiKothariManohar2022,
HsiehKothariMohanty2023,
HsiehKothariMohantyMunhaCorreiaSudakov2024} to handle odd-uniform hypergraphs, with some appropriate adaptations.

Let $B_k$ be a sufficiently large constant, depending only on $k$.

\subsection{Large $\rho$ Case}
If $\rho>n/B_k$, increasing constants $c_k$ and $C_k$ in Theorem \ref{thm:moore} allow us to use the same argument from Section \ref{sec:small_density_case}.
In particular, choosing
\begin{equation*}
    c_k \geq B_k^{\frac k2-1},\qquad\qquad
    C_k \geq B_k + 1
\end{equation*}
guarantees that there is a collection of $n+1$ linearly dependent vectors that form an even cover of the desired length.
From now on, we may therefore assume that
\begin{equation*}
\rho\leq \frac{n}{B_k},
\end{equation*}
and follow~\cite{HsiehKothariMohanty2023} closely.\footnote{We chose not to reconstruct many of the arguments and descriptions in~\cite{HsiehKothariMohanty2023} and encourage the reader to consult~\cite{HsiehKothariMohanty2023}.} 

\subsection{Hypergraph Decomposition}

Set $\ell\coloneqq \max\{ 2k,\lceil{\rho\rceil} \}$, so that $\rho\leq \ell \leq 2k\rho$.
For $1\leq i\leq k-1$, set $\tau_i \defeq \max\{ 2, \left(\frac{n}{\ell} \right)^{\frac{k}2-i} \}$.\footnote{
Note that $\left(\frac{n}{\ell} \right)^{\frac{k}2-i}$ may not be an integer, we do not explicitly take a ceiling to not overload notation.}
We apply~\cite[Algorithm 1]{HsiehKothariMohanty2023}, which partitions the hyperedges $H$ into $k$ sets
\[
    H = \HHH^{(0)} \sqcup \HHH^{(1)} \sqcup \cdots \sqcup \HHH^{(k-1)} 
\]
having the following properties (see~\cite{HsiehKothariMohanty2023} for a proof):
\begin{enumerate}
    \item There exists $0<i<k$ (in particular $i\neq 0$) such that $|\HHH^{(i)}|\geq \frac m k$.\footnote{By the pigeonhole principle, there exists such $0\leq i <k$, but, as~\cite{HsiehKothariMohanty2023} shows, if $|\HHH^{(0)}|\geq \frac{m}{k}$, there would be a node $v \in [n]$ in $\frac{k|\HHH^{(0)}|}{n}\geq\frac{m}{n} \geq c_k \left(\frac n\rho\right)^{\frac k2-1}$ in $\HHH^{(0)}$ hyperedges, so some of these hyperedges would have been added to $\HHH^{(1)}$ in Algorithm 1 of~\cite{HsiehKothariMohanty2023}.}
    
    \item For all $1\le i<k$, we can partition $\HHH^{(i)} = H_1^{(i)} \sqcup \cdots \sqcup H_{h_i}^{(i)}$ such that, for all $j\in [h_i]$, $|H_j^{(i)}|\geq\tau_i$,
    and there exists $U^{(i)}_j\in \binom {[n]} i$ such that $U^{(i)}_j\subseteq E$ for all $E\in H_j^{(i)}$. For $i\geq \frac{k+1}{2}$, $|H_j^{(i)}|=\tau_i=2$. Moreover, $h_i\leq \frac{m}{\tau_i}$.
    \item For any $1\le i,s$ such that $i+s<k$ and $U\in \binom {[n]} {i+s}$, the number of hyperedges in $\HHH^{(i)}$ containing $U$ is less than $\tau_{i+s}$ (in particular, if $i\geq \frac{k+1}2$, it is at most 1).
\end{enumerate}

The subsets $H_j^{(i)}$ are called \emph{buckets}, and we refer to the subset $U_j^{(i)}$ shared by hyperedges in the bucket as their \emph{center}. 

\subsection{Large Centers: Even-Uniformity Bound} 

In this section, we treat the case where
condition (1) above (i.e., $|\HHH^{(i)}|\geq \frac{m}{k}$) is achieved for some $\frac {k+1} 2 \le i<k$. 
Conditions (2) and (3) above imply that for all $j\in [h_i]$, $H^{(i)}_j$ consists of a single pair of hyperedges $\{E_j,F_j\}$, whose intersection size is exactly $i$.
 
Now construct a $2(k-i)$ even-uniform hypergraph, $\hat{H}$, with edges $\{E_j\mathbin \Delta F_j:j\in [h_i]\}$. The proof of~\cite[Lemma~3.4]{HsiehKothariMohanty2023} shows that assuming $H$ has no even cover of size $4$, if $\hat{H}$ has an even cover of size $q$, then $H$ has an even cover of size at most $2q$. By our assumption, $\hat{H}$ has at least $\frac{m}{2k}$ hyperedges. Assuming $m\geq c_kn(n/\rho)^{k/2-1}$, the even case of Theorem ~\ref{thm:moore}, together with the ordinary Moore bound (e.g.~\cite[Proposition 2.1]{HsiehKothariMohanty2023}), implies $\hat{H}$ has an even cover of size at most $O_k(\rho\log{n})$.

\subsection{Small Centers: Colored Kikuchi Graph}

We focus in the remaining case: suppose that
there exists $i\leq \frac{k-1}{2}$ such that $|\HHH^{(i)}|\geq \frac{m}{k}$. We fix $i$
from now on and write $H_j\coloneqq H^{(i)}_j$ and $U_j \defeq U^{(i)}_j$.
For each $E\in H_j$, we denote by $\tilde{E}\defeq E\setminus U_j$ the hyperedge with its center removed. 

The argument proceeds by building the {\em colored Kikuchi graph $K_c$}~\cite[Definition 3.5]{HsiehKothariMohanty2023}. We duplicate each node $v\in[n]$ into  two labeled versions $(v,1)$ and $(v,2)$ of $v$ (called green and blue in~\cite{HsiehKothariMohanty2023}). 
Then, $K_c$ is a graph with vertex set
\[
    V(K_c) = {\binom {[n]\times \{1,2\}} \ell}\,.
\]
Given $S\in V(K_c)$, we denote by $S^{(1)} = \{v\in [n] : (v,1)\in S\}$ and similarly for $S^{(2)}$.

We now describe the edge set of $K_c$.
A pair $(E,F)\in H_j\times H_j$ with $E\neq F$ forms an edge in $K_c$ between $S,T\in V(K_c)$ if the following three conditions hold:
\begin{eqnarray*}
    S^{(1)}\mathbin{\Delta} T^{(1)} &=& \tilde{E} \\ 
    S^{(2)}\mathbin{\Delta} T^{(2)} &=& \tilde{F} \\
    \left( \left| \tilde{E} \cap S^{(1)} \right| , \left| \tilde{F} \cap S^{(2)} \right| \right) &\in& \left\{ \left(\left\lceil\frac {k-i} 2\right\rceil,\left\lfloor\frac {k-i} 2\right\rfloor\right),\left(\left\lfloor\frac {k-i} 2\right\rfloor,\left\lceil\frac {k-i} 2\right\rceil\right) \right\},
\end{eqnarray*}
in which case the edge is labeled $(E,F)$.

Note that, assuming the girth is larger than four, $K_c$ is a simple graph. Indeed, if there were $S,T\in V(K_c)$ connected by two distinct edges labels $(E,F)$ and $(E',F')$ then either $E=F'$ (and $F=E'$) or $E,F,E',F'$ forms an even cover of size $4$. If $E=F'$ then $E$ and $E'$ would have to belong to the same bucket, and $\tilde{E}=\tilde{E'}$, so $E=E'$, and so the edge labels were not distinct.

\subsection{Pruning Colored Kikuchi Graph}
The next step in \cite{HsiehKothariMohanty2023} is to prune $K_c$ so that each bucket of hyperedges contributes only a low-degree subgraph to the Kikuchi matrix. For $S\in V(K_c)$, let $d_E(S)$ be the number of edges in $K_c$ incident on $S$ whose label contains $E$.
Let $K_c^\star$ be the subgraph of $K_c$ consisting of edges $\{S,T\}$, with label $(E,F)$, such that
\begin{equation}\label{eq:condition_given_by_pruning}
    d_E(S)=d_F(S)=d_E(T)=d_F(T)=1.
\end{equation}
A key observation in~\cite{HsiehKothariMohanty2023} is that $K_c$ remains dense after pruning.

\begin{proposition}[{\cite[Claim 3.10]{HsiehKothariMohanty2023}}] 
    \label{prop:odd-fixed-role-package}
    There is a constant $\gamma_k>0$, depending only on $k$, such that, if
    $B_k$ is sufficiently large, then
    \begin{equation*}
        \avgdeg(K_c^\star)\geq
        \gamma_km\brap{\frac\ell n}^{k/2}.
    \end{equation*}
\end{proposition}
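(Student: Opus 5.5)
The plan is to follow the double-counting strategy of \cite[Claim 3.10]{HsiehKothariMohanty2023}: first lower-bound the number of edges of $K_c$, then show that pruning by~\eqref{eq:condition_given_by_pruning} removes at most a constant fraction of them. Both steps use the decomposition properties (2) and (3).

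\textbf{Counting edges of $K_c$.} For a fixed bucket $H_j$ and an ordered pair $(E,F)$ of distinct hyperedges in it, write $r\defeq k-i$, so that $|\tilde E|=|\tilde F|=r$. The number of $K_c$-edges labeled $(E,F)$ is computed as in Proposition~\ref{prop:kikuchi-avg}. We choose which $\lceil r/2\rceil$ (or $\lfloor r/2\rfloor$) elements of $\tilde E$ lie in $S^{(1)}$ and which elements of $\tilde F$ lie in $S^{(2)}$. We then fill the remaining $\ell-r$ slots of $S$ from the $2n-2r$ colored vertices outside $(\tilde E\times\{1\})\cup(\tilde F\times\{2\})$. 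This gives roughly $\binom{2n}{\ell-r}\asymp \binom{2n}{\ell}(\ell/2n)^{r}$ edges per label. The vertex count is $N_c=\binom{2n}{\ell}$, and the number of labels is $\sum_j|H_j|(|H_j|-1)\gtrsim \tau_i\sum_j|H_j|\ge \tau_i m/k$. Multiplying, we get
\[
\avgdeg(K_c)\gtrsim_k \frac mk\,\tau_i\brap{\frac\ell n}^{k-i}
\ge \frac mk \brap{\frac n\ell}^{k/2-i}\brap{\frac\ell n}^{k-i}= \frac mk\brap{\frac\ell n}^{k/2}.
\]
This uses $\ell\le n/B_k$ for the binomial estimates.

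\textbf{Bounding the pruned edges.} An edge $\{S,T\}$ labeled $(E,F)$ is deleted only if some $E'\neq E$ (or $F'$) with $E'\in\{E,F\}$'s role also yields an edge at $S$ or $T$. We count bad configurations: pairs consisting of a $K_c$-edge at $S$ with label containing $E$, together with a second edge at $S$ with label containing $E$ but a different partner $G$. The sum $\sum_S\sum_E \binom{d_E(S)}{2}$ bounds the number of removed edges up to a factor of $4$. A second edge at $S$ using $E$ requires its partner $G\in H_j$ to have $\tilde G$ meeting $S^{(1)}$ or $S^{(2)}$ in about $r/2$ prescribed positions. Equivalently, once we fix $S$ and $E$ and the intersection pattern, we need to bound the number of $G$ in the bucket sharing a prescribed set $W$ of $\approx r/2$ elements beyond $U_j$.

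Here property (3) enters. The hyperedges of $\HHH^{(i)}$ containing $U_j\cup W$, with $|U_j\cup W|=i+s$ and $s\ge1$, number fewer than $\tau_{i+s}$. Summing over the possible $W\subseteq S$, there are $\binom{\ell}{s}\lesssim \ell^s$ choices. This should give an expected excess degree per edge of at most
\[
\sum_{s\ge1}\tau_{i+s}\,\ell^{s}\brap{\frac{1}{n}}^{s}\cdot(\text{const})
\approx \sum_{s}\brap{\frac n\ell}^{k/2-i-s}\brap{\frac\ell n}^{s}\Big/\brap{\frac n\ell}^{k/2-i}\cdot \tau_i .
\]
After normalizing against the main term, this is $O_k(\sum_s (\ell/n)^{s}\cdot\text{stuff})$. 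It is small once $B_k$ is large, because $\ell/n\le 2k/B_k$. When $\tau_{i+s}=2$ (the $\max$ is active), the bound $\tau_{i+s}\le 2$ is even better. Hence the number of removed edges is at most half of $|\mathcal E(K_c)|$, and the claim follows with, say, $\gamma_k$ equal to half the constant from the first step. Note also that any two of the cases $E$, $F$ and $S$, $T$ are symmetric.

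\textbf{Main obstacle.} The delicate step is the second one: matching exponents exactly so that the pruning loss is a small constant fraction with no logarithmic loss. This requires using the precise thresholds $\tau_{i+s}$ and the balanced split $\lceil r/2\rceil,\lfloor r/2\rfloor$. I would first try the cleanest version, counting pairs of edges sharing a vertex and a hyperedge, and verifying the exponent bookkeeping $\tau_{i+s}(\ell/n)^{s}\le \tau_i (\ell/n)^{2s}\cdot(n/\ell)^{s}$, i.e.\ $\tau_{i+s}\le\tau_i(\ell/n)^{s}$ up to the $\max\{2,\cdot\}$ correction. Failing a self-contained argument, I would cite the computation in \cite[Claim 3.10]{HsiehKothariMohanty2023} directly, since our definitions of $K_c$, $\tau_i$ and the pruning rule coincide with theirs.
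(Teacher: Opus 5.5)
Your first step (lower-bounding $|\mathcal{E}(K_c)|$) is fine. Bounding the pruned edges by pairs of $K_c$-edges at a common vertex that share a hyperedge is also the right architecture: it is the route of \cite[Claim 3.10]{HsiehKothariMohanty2023}, which the paper cites rather than reproves. The gap is in your second step, which carries all the content. You fix $S$ and count partners $G$ by summing over $W\subseteq S$ with $|W|=s$, applying property (3) to $U_j\cup W$ with $W=\tilde G\cap S^{(2)}$. That is a worst-case count: for $s\approx r/2$ it allows up to $\binom{\ell}{s}$ partners. The factor $n^{-s}$ you attach has no source, since you never average over $S$ and $|H_j|$ never enters. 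Your display is not small either: with $\tau_{i+s}=(n/\ell)^{k/2-i-s}$ the summand is $(n/\ell)^{k/2-i-2s}$, e.g.\ $(n/\ell)^{3/2}$ for $k=9$, $i=s=1$. And the bookkeeping $\tau_{i+s}\le\tau_i(\ell/n)^s$ is just the definition of the thresholds, so it proves nothing.

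What is needed is to fix the triple $(E,F,G)$ and average over the $S$ carrying the edge labeled $(E,F)$, with $a=|\tilde E\cap S^{(1)}|$. Demanding also $|\tilde G\cap S^{(2)}|=r-a$ costs a factor $\approx(\ell/n)^{r-a-w}$, where $w=|\tilde G\cap\tilde F\cap S^{(2)}|$. Property (3) must be applied to $U_j\cup W$ with $W\subseteq\tilde F$ (only $O_k(1)$ choices), not with $W\subseteq S$; this gives fewer than $\tau_{i+w}$ such $G$. When $\tau_{i+w}=(n/\ell)^{k/2-i-w}$, the contribution is $(\ell/n)^{(r-a)-(k/2-i)}\le(\ell/n)^{1/2}$. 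This uses $r-a\ge\lfloor r/2\rfloor$, $i\ge1$, and $i\ge 2$ when $r$ is odd. When $\tau_{i+w}=2$, the naive term is of order $1$ at $w=r-a$. What saves you is not that ``$\tau_{i+s}\le 2$ is even better'': it is that $F$ is then the unique hyperedge of $\HHH^{(i)}$ containing $U_j\cup W$, so no $G\ne F$ exists.

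Two further pieces are missing. First, the $w=0$ term is $|H_j|(\ell/n)^{r-a}$, which is small only if $|H_j|=O(\tau_i)$. Property (2) as stated gives only a lower bound, so you must split buckets to size below $2\tau_i$; this is harmless for property (3). Second, labels $(G,E)$ with $E$ in the second colour also count toward $d_E(S)$. They are not symmetric to the same-role case and need their own analogous count, using the overlaps $\tilde G\cap\tilde E\cap S^{(1)}$ and $\tilde E\cap\tilde F\cap S^{(2)}$. With these repairs the pruned fraction is $O_k((\ell/n)^{1/2})$, which is small once $B_k$ is large. As written, the step you flag as the main obstacle is not carried out, and only your fallback of citing HKM23 matches what the paper does.
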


At this point we need deviate slightly from the argument in~\cite{HsiehKothariMohanty2023} and for every original hyperedge $E$, we retain only one of $E^{(1)}$ and $E^{(2)}$. We do this by building a map $\sigma:\HHH^{(i)}\to\brac{1,2}$ and retaining only an edge with label $(E,F)$ if $\sigma(E)=1$ and $\sigma(F)=2$. A routine probabilistic method argument can show such a map exists while keeping at least $\frac14$ of the edges. One can then show, that for each state $S\in V(K_c^\star)$, and each (post-prunned) bucket $H_j^\star$, there are only at most two edges incident on $S$ with labels in $H_j^\star$. This means that, each bucket $H_j^\star$, contributes a subgraph of $K_c^\star$ of degree at most 2. By another routine argument, one can take a subset of these edges, of size at least $1/3$, that forms a matching. The subgraph formed by these matchings has a number of edges at least $1/12$ of $H_j^\star$. A direct analogue of Lemma~\ref{prop:kikuchi-avg} provides a subgraph $K^{\star\star}$ with a large minimum degree, on which we will do the neighborhood expansion argument. This construction is done in the Lemma below.

\begin{lemma}[Dense core with bucket matchings]\label{lemma:densecorebucketmatchings}
    There is a subgraph $K_c^{\star\star}$ of $K_c^{\star}$ 
    such that:
    \begin{enumerate}
        \item There is a map $\sigma\colon \HHH^{(i)} \to \brac{1,2}$ such that the edge with the label $(E,F)$ stays if $\sigma(E) = 1$ and $\sigma(F) = 2$.
        
        \item For every state $S \in V(K_c^{\star})$ and every bucket $H_j^{\star}$ there is at most one edge whose label $(E,F)$ contains hyperdges $E,F$ from the bucket $H_j^{\star}$.
        
        \item Its minimum degree is lower bounded by
        \begin{equation*}
            \mindeg(K_c^{\star\star})\geq
            \frac{1}{24}\gamma_km\brap{\frac\ell n}^{k/2}.
        \end{equation*}
    \end{enumerate}
\end{lemma}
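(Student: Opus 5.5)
We start from $K_c^\star$, whose average degree is at least $\gamma_k m(\ell/n)^{k/2}$ by Proposition~\ref{prop:odd-fixed-role-package}, and apply three successive thinning steps. Each step keeps a constant fraction of the edges, so the average degree stays above $\frac{1}{12}\gamma_k m(\ell/n)^{k/2}$. At the end we pass to a core of minimum degree at least half that value, exactly as in the proof of Lemma~\ref{lem:denseSubgraph}, which accounts for the factor $1/24$. Properties (1) and (2) are inherited by subgraphs, so they only need to hold after the first two steps.

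\emph{Step 1 (orientation).} Choose $\sigma\colon\HHH^{(i)}\to\{1,2\}$ uniformly at random and independently for each hyperedge. An edge labeled $(E,F)$ has $E\neq F$, so it survives ($\sigma(E)=1$, $\sigma(F)=2$) with probability exactly $1/4$. By linearity of expectation, some $\sigma$ keeps at least $\frac14|\mathcal E(K_c^\star)|$ edges; fix it. This gives (1).

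\emph{Step 2 (bucket structure).} We claim that after Step 1, each state $S$ has at most two incident edges with labels in a fixed bucket $H_j$. Suppose an edge at $S$ has label $(E,F)$ with $E,F\in H_j$. By \eqref{eq:condition_given_by_pruning}, $d_E(S)=d_F(S)=1$, so no other edge at $S$ has a label containing $E$ or $F$. A second edge at $S$ from the same bucket has label $(E',F')$ with $\{E',F'\}\cap\{E,F\}=\emptyset$.

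The orientation is meant to rule out a third such edge. Since $\sigma(E')=1$ and $\sigma(F')=2$, the hyperedge roles are fixed, and the parity and size constraints on $\tilde E\cap S^{(1)}$ and $\tilde F\cap S^{(2)}$ restrict how many disjoint pairs can act on $S$ simultaneously. I expect this to be the main obstacle, and I would verify it carefully against the structure in \cite{HsiehKothariMohanty2023}. If the claim only yields a bound $D_k$ depending on $k$, the argument still goes through with a worse constant.

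Given the claim, for each $j$ the edges with labels in $H_j$ form a graph $G_j$ of maximum degree at most $2$, that is, a union of paths and cycles. Such a graph contains a matching with at least $1/3$ of its edges: take alternate edges along each path or cycle, with the worst case being a triangle. Keeping one such matching for each $j$ gives (2) and retains at least $\frac1{12}$ of the edges surviving Step 0.

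\emph{Step 3 (core).} The resulting graph has average degree at least $\frac{1}{12}\gamma_k m(\ell/n)^{k/2}$. Repeatedly deleting vertices of degree below half the average, as in the footnote to Lemma~\ref{lem:denseSubgraph}, leaves a nonempty subgraph $K_c^{\star\star}$ with minimum degree at least $\frac1{24}\gamma_k m(\ell/n)^{k/2}$. This is (3). Deleting vertices and edges preserves (1) and (2), which completes the construction.
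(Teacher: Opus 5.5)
\textbf{Gap: the per-bucket degree bound in Step 2 is not proved.} Your outline matches the paper's: random orientation (factor $4$), per-bucket graphs of maximum degree $2$ thinned to matchings (factor $3$), then a core (factor $2$), giving $1/24$. But the only nontrivial step is the claim that each state $S$ has at most two incident edges with labels in a fixed bucket $H_j$. You leave this unproved, appealing vaguely to ``parity and size constraints'' and deferring to \cite{HsiehKothariMohanty2023}.

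Your fallback does not close the gap. A bound $D_k$ would not give the stated constant $1/24$. More importantly, you have no argument for any constant bound. Pairwise disjointness of the labels at $S$, which you correctly derive from \eqref{eq:condition_given_by_pruning}, does not by itself limit the count, since a bucket can contain $\tau_i$ hyperedges and this may be polynomially large in $n/\ell$.

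\textbf{The missing idea is a crossing argument.} The third defining condition of $K_c$ allows at most two intersection profiles $\bigl(|\tilde E\cap S^{(1)}|,|\tilde F\cap S^{(2)}|\bigr)$. So among three edges at $S$ from $H_j$, two of them, labeled $(E,F)$ and $(E',F')$, share a profile $(a,b)$ with $a+b=k-i$. The crossed pair $(E,F')$ then also labels an edge of $K_c$ at $S$:
\begin{itemize}
\item set $T''^{(1)}=S^{(1)}\mathbin\Delta\tilde E$ and $T''^{(2)}=S^{(2)}\mathbin\Delta\tilde F'$;
\item then $|T''|=\ell+2(k-i)-2(a+b)=\ell$, so $T''$ is a valid state;
\item $E\neq F'$ lie in the same bucket;
\item the profile of $(E,F')$ at $S$ is again $(a,b)$, which is admissible.
\end{itemize}
Since $F\neq F'$, this is a second edge of $K_c$ at $S$ whose label contains $E$. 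Hence $d_E(S)\ge 2$, contradicting \eqref{eq:condition_given_by_pruning} for the edge $(E,F)$ of $K_c^\star$.

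The distinctness this needs is exactly the disjointness $\{E',F'\}\cap\{E,F\}=\emptyset$ you already obtained from pruning. So the orientation $\sigma$ is not what rules out a third edge; it matters later, in making the palette determine $S$ and in the polynomials $f_S$. With this step supplied, the rest of your proposal is correct.
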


\begin{proof}
    If a map $\sigma$ is chosen uniformly at random, every edge in $K_c^{\star}$ is retained with probability $1/4$. Hence there is some map ensuring that the density is reduced by a factor $\leq 4$.
    
    Fix a state $S$ and suppose for the sake of contradiction that there are three edges incident with labels in $H_j^{\star}$. By pigeonhole principle, one can find two edges with labels $(E,F)$ and $(E',F')$ incident such that
    \begin{equation*}
        \left( \left| \tilde{E} \cap S^{(1)} \right| , \left| \tilde{F} \cap S^{(2)} \right| \right)
        =
        \left( \left| \tilde{E}' \cap S^{(1)} \right| , \left| \tilde{F}' \cap S^{(2)} \right| \right).
    \end{equation*}
    We claim that $E, F, E', F'$ are four distinct edges, i.e. all six pairs are distinct. Four of them are guaranteed since $\sigma(E) = \sigma(E') = 1$ and $\sigma(F) = \sigma(F') = 2$.
    For the remaining two, if either $E = E'$ or $F = F'$, by the pruning condition \eqref{eq:condition_given_by_pruning} both must hold (as otherwise $d_E(S) \geq 2$ or $d_F(S) \geq 2$), but then $(E,F) = (E',F')$, contradiction.

    As these are four distinct hyperedges, then $S$ is incident to an edge $(E,F')$ in $K_c$, which is different from $(E,F)$, again yielding contradiction by \eqref{eq:condition_given_by_pruning}.

    Thus the set of edges with labels from $H_j^\star$ is currently a vertex-disjoint union of a paths and cycles.
    One now reduces a density by a factor $\leq 3$ to guarantee that the edges with labels from $H_j^\star$ form a matching.
    
    Finally, one reduces a density by a factor $\leq 2$ to produce a dense core as argued in Lemma \ref{lem:denseSubgraph}.    
\end{proof}

\subsection{Using the Polynomial Method}
We proceed by utilizing the polynomial method density bound (Lemma \ref{lem:polynomial_block}) to show the following analogue of Lemma \ref{lem:densityNeighborhood}.

\begin{lemma}\label{lem:densityNeighborhoododd}
    Let $g$ be the size of the smallest even cover of $H$, and let $\mathcal S\subseteq V(K_c^{\star\star})$ be contained in a ball of radius $\frac g 4 - 1$
    with respect to the graph distance on $K_c^{\star\star}$.
    Then, $\avgdeg(K_c^{\star\star}[\mathcal S])\le 2\ell$.
\end{lemma}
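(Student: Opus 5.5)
The plan is to apply Lemma~\ref{lem:polynomial_block} with the coordinates grouped into blocks indexed by the buckets. In the even case each Kikuchi edge flipped one hypercube coordinate. Here an edge of $K_c^{\star\star}$ with label $(E,F)$ flips two hyperedges, but both lie in the same bucket $H_j^\star$. So I take $h$ to be the number of buckets and $\Omega_j \coloneqq \FF_2^{H_j^\star}$, which makes $\Omega = \prod_j \Omega_j \cong \FF_2^{\HHH^{(i)}}$.

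\textbf{Palette.} Let $\mathcal S$ lie in a ball of radius $r \le \frac g4 - 1$ around $S_0$. For $S \in \mathcal S$, I define $\pal(S)\in\Omega$ by taking any walk of length at most $r$ from $S_0$ to $S$ and summing $\eps_E+\eps_F$ over the labels $(E,F)$ of its edges. To check this is well defined, take two such walks and concatenate them into a closed walk. The hyperedges appearing an odd number of times along it form an even cover of $H$: each step changes $S^{(1)}$ by $\tilde E$ and $S^{(2)}$ by $\tilde F$, so $\1_E+\1_F = \1_{\tilde E}+\1_{\tilde F}$ since $E,F$ share the center $U_j$, and a closed walk returns both colour classes to their start. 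This cover has size at most $2\cdot 2r \le g-4 < g$, so it is empty and the two palettes agree.

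\textbf{Hamming distance and degree bound.} Now let $S,T\in\mathcal S$ be adjacent via $(E,F)$ in bucket $j$. The same argument, applied to the closed walk of length at most $2r+1$ (giving a cover of size at most $2(2r+1) \le g-2$), shows that $\pal(S)+\pal(T)=\eps_E+\eps_F$. Hence $d_H(\pal(S),\pal(T))=1$, and the only block that differs is $j$. By Lemma~\ref{lemma:densecorebucketmatchings}(2), the edges whose label comes from bucket $j$ form a matching. So every $G_j$ has maximum degree at most $1$, and we may take $\Delta=1$.

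\textbf{Polynomials.} Next I recover $S$ from its palette. By Lemma~\ref{lemma:densecorebucketmatchings}(1), green changes come only from hyperedges with $\sigma(E)=1$ and blue changes only from those with $\sigma(F)=2$. Therefore, for $X=\pal(T)$,
\[
\One\{(v,1)\in T\} = \One\{v\in S_0^{(1)}\} + \sum_{E:\,\sigma(E)=1,\ v\in\tilde E} X_E ,
\qquad
\One\{(v,2)\in T\} = \One\{v\in S_0^{(2)}\} + \sum_{F:\,\sigma(F)=2,\ v\in\tilde F} X_F .
\]
Each right-hand side is an affine function of $X$. In particular the palette determines $T$, so the labels are distinct.

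For $S\in\mathcal S$, I set $f_S$ to be the product of the corresponding affine forms over the $\ell$ elements $(v,c)\in S$. Evaluated at $X=\pal(T)$, this gives $f_S(\pal(T)) = \One\{S\subseteq T\}=\One\{S=T\}$, because $|S|=|T|=\ell$. Expanding the product gives monomials in at most $\ell$ variables, each of which depends on at most $\ell$ blocks, so $f_S\in F_{\le \ell}$. Lemma~\ref{lem:polynomial_block} then yields $\avgdeg(K_c^{\star\star}[\mathcal S])\le 2\ell$.

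\textbf{Main obstacle.} The step needing most care is the parity bookkeeping that turns closed walks into even covers of size less than $g$. Two things must be checked there. First, one must use that $E$ and $F$ share the center $U_j$, so that the two uncolored changes $\tilde E,\tilde F$ combine to $\1_E+\1_F$. Second, the factor $2$ from two hyperedges per step is exactly why the radius bound is $\frac g4-1$ rather than the $\frac g2-1$ of Lemma~\ref{lem:densityNeighborhood}.
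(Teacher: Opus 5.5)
Your proposal is correct and follows the paper's proof essentially step for step. It uses the same ingredients: bucket-indexed blocks $\Omega=\prod_j \FF_2^{H_j}$, the palette obtained by summing $\eps_E+\eps_F$ along walks, Hamming distance one in the block of the edge's bucket, affine recovery of both colour classes via $\sigma$, and the degree-$\ell$ product polynomials fed into Lemma~\ref{lem:polynomial_block}. You are more explicit than the paper on two points it leaves implicit: the center cancellation $\1_E+\1_F=\1_{\tilde E}+\1_{\tilde F}$ behind well-definedness, and the fact that $\Delta=1$ (needed to get $2\ell$ rather than $2\Delta\ell$) comes from the bucket matchings of Lemma~\ref{lemma:densecorebucketmatchings}(2).
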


As before, let $S_0 \in V(K_c^{\star\star})$ so that $\mathcal S$ is contained in the ball of radius $j \leq \frac g 4 - 1$ around $S_0$. We adapt the color palette (Definition \ref{def:palette}) as follows.

\begin{definition}[Color palette for colored Kikuchi]\label{def:adapted_palette}
Assign to each vertex $S\in\mathcal S$ its \emph{color palette}, an element $\pal(S) \in \FF_2^H$, formed by taking a walk of length $t \leq j$ from $S_0$ to $S$, using color-pairs $(E_{2i-1},E_{2i})$, and recording which hyperedges from $\brac{E_1,\ldots,E_{2t}}$ were visited an odd number of times. Because the girth of $H$
is greater than $4j + 1$, this definition does not depend on the walk taken. 
Explicitly, given any such walk, we set
\[ \pal(S) \coloneqq \sum_{i = 1}^{t} \brap{\eps_{E_{2i-1}} + \eps_{E_{2i}}}  \in \mathbb{F}_2^H.\]
\end{definition}
\noindent
As before, the color palette $\pal(S)$ together with the choice of root $S_0$ determines $S$ (recall that, 
for every hyperedge $E$ only $E^{(\sigma(E))}$ is present in edge color-pairs of $K_c^{\star\star}$).

Set $h = h_i$ to be the number of buckets.
While $K_c^{\star\star}[\mathcal S]$ is not isomorphic
to the subgraph of the hypercube, the palettes of any two adjacent vertices in $K_c^{\star\star}$ differ only in coordinates of one bucket. I.e. for any $S \sim_{K_c^{\star\star}} T$ there is unique $j \in [h]$ for which
\begin{equation*}
    \mathrm{supp}\brap{\pal(S) + \pal(T)} \subseteq H_j^{\star\star}.
\end{equation*}

Therefore neighboring palettes have Hamming distance one with respect to the product set
\begin{equation*}
    \Omega = \F_2^{H_1} \times \ldots \times \F_2^{H_h}.
\end{equation*}

Now we construct the polynomials akin to \eqref{eq:polynomial_even_case}, by setting
\begin{equation*}
    f_S(X) = \prod_{(v,\alpha) \in S}\left(\One\{(v,\alpha) \in S_0\} + \sum_{E \in \HHH^{(i)}:\, v \in \tilde E, \alpha = \sigma(E)} X_E\right).
\end{equation*}
Likewise $f_S(\pal(T)) = \One\{S = T\}$.

These polynomials are of degree $\ell$, thus in the span of the monomials (which are functions that depend only on $\ell$ coordinates). Lemma \ref{lem:polynomial_block} then yields $\avgdeg(K_c^{\star \star}[\mathcal S]) \leq 2\ell$.

Now that we have established Lemma~\ref{lem:densityNeighborhoododd}, the rest of the proof proceeds as in the even case. Due to Lemma~\ref{lemma:densecorebucketmatchings} we know the minimum degree in $K_c^{\star\star}$ is at least $\frac1{24}\gamma_k m \left(\frac{\ell}{n} \right)^{k/2}$. From \eqref{eq:feigeCondition} (and the fact that $\rho\leq \ell$), we have
\[
\mindeg\left(K_c^{\star\star}\right) \geq
\frac1{24}\gamma_k c_k n \left(\frac{n}{\rho} \right)^{k/2-1} \left(\frac{\ell}{n} \right)^{k/2} \geq \frac{\gamma_k c_k}{24}\ell.
\]
Pick $c_k$ large enough so that $\frac{\gamma_k c_k}{24}\geq 32$, so $\mindeg\left(K_c^{\star\star}\right)\geq 32\ell$. 
As in the proof of the even case, we fix a vertex, $S_0$, and let $\NNN_j$ be neighborhood of radius $j$ around $S_0$ in $K_c^{\star \star}$. Since, for $j\leq \frac{g}4-2$, Lemma~\ref{lem:densityNeighborhoododd} implies $|\mathcal{E}(K_c^{\star \star}[\NNN_{j+1}])|\leq \ell |\NNN_{j+1}|$, the inequality \eqref{eq:exponentialgrowthneighborhoods32and16} holds.

We set 
\begin{equation*}
    q \coloneqq \left\lfloor \frac14 C_k\rho\log n \right\rfloor,
\end{equation*}
and we finish as in the even case.

\appendix

\section{Proof of Proposition~\ref{prop:kikuchi-avg}}\label{app:kikuchi_density}

We first show~\eqref{eq:kikuchi_average_degree}.
For a given $E \in H$, the number of pairs $(S, T) \in V(K)^2$ such that $S \mathbin \Delta T = E$ is
\[
    \binom{k}{k/2}\binom{n - k}{\ell - k/2}.
\]
Thus,
\[
    \sum_{S \in V(K)} \deg(S) = m\binom{k}{k/2}\binom{n - k}{\ell - k/2},
\]
and the formula for $\avgdeg(K)$ follows upon dividing by $|V(K)| = \binom{n}{\ell}$.

We next show \eqref{eq:kikuchi_average_degree_est}.
Define $s \coloneqq k/2$.
Then, \eqref{eq:kikuchi_average_degree} can be rewritten as
\begin{equation}
\avgdeg(K) = \binom{k}{s} \cdot m \cdot \frac{(\ell)_s(n - \ell)_s}{(n)_{2s}}, \label{eq:kikuchi_average_degree_falling}
\end{equation}
where $(a)_b = a(a - 1) \cdots (a - b + 1)$ is the falling factorial.
Since $\ell \geq s$ by assumption, we have the estimate
\begin{equation*}
        (\ell)_s
        \geq
        \frac{s!}{s^s}\ell^s.
    \end{equation*}
    Moreover, $\ell\leq n/8$ and $s\leq n/8$, so
    \begin{equation*}
        (n-\ell)_s \geq \brap{\frac{3n}{4}}^s, \qquad\qquad
        (n)_{2s} \leq n^{2s}.
    \end{equation*}
    Combining \eqref{eq:kikuchi_average_degree_falling} with these estimates, we find that
    \begin{equation*}
        \avgdeg(K)
        \geq
        \beta_s m\brap{\frac \ell n}^s,
    \end{equation*}
    where
    \[ \beta_s
        =
        \binom{2s}{s}\frac{s!}{s^s}\brap{\frac34}^s. \]
    We claim that $\beta_s>1$. Indeed,
    \begin{equation*}
        \beta_s
        =
        \brap{\frac34}^s
        \prod_{j=1}^s\brap{1+\frac js}.
    \end{equation*}
    Since $x\mapsto\log(1+x)$ is increasing,
    \begin{equation*}
        \frac1s\sum_{j=1}^s\log\brap{1+\frac js}
        \geq
        \int_0^1\log(1+x)\,\mathrm dx
        =2\log 2-1.
    \end{equation*}
    Therefore
    \begin{equation*}
        \log\beta_s
        \geq
        s\brap{\log(3/4)+2\log 2-1}
        =s(\log 3-1)>0,
    \end{equation*}
    completing the proof.

\raggedbottom
\bibliographystyle{alpha}
\bibliography{main}

\end{document}